\newtheorem{theorem}{Theorem}
\newtheorem{lemma}[theorem]{Lemma}
\newtheorem{corollary}[theorem]{Corollary}
\theoremstyle{definition}
\newcommand{\R}{\mathbb R}
\newcommand{\dd}{\mathrm d}
\newcommand{\Fr}{\textrm{Fr}}
\title{An improved upper bound for the Froude number of irrotational solitary water waves}
\author{Evgeniy Lokharu}
\address{Centre for Mathematical Sciences, Lund University, SE-22100 Lund, Sweden}
\email{evgeniy.lokharu@math.lu.se}
\author{Jörg Weber}
\address{Faculty of Mathematics, University of Vienna, AT-1090 Vienna, Austria}
\email{joerg.weber@univie.ac.at}
\date{}
\begin{document}

\begin{abstract}
A classical and central problem in the theory of water waves is to classify parameter regimes for which nontrivial solitary waves exist. In the two-dimensional, irrotational, pure gravity case, the Froude number $\Fr$ (a non-dimensional wave speed) plays the central role. So far, the best analytical result $\Fr < \sqrt{2}$ was obtained by \cite{Starr47}, while the numerical evidence of \cite{Longuet-Higgins1974-py} states $\Fr \leq 1.294$. On the other hand, as shown recently by \cite{Kozlov2023-sw}, the hypothetical upper bound $\Fr < 1.399$ is related to the existence of subharmonic bifurcations of Stokes waves. In this paper, we develop a new strategy and rigorously establish the improved upper bound $\Fr < 1.3451$, which is the first rigorous improvement of Starr's bound. In this process, we establish several new inequalities for the relative horizontal velocity, which are of separate interest and for which we delicately make use of the bound on the slope of the surface profile established by \cite{Amick1987}. As an application we show that the velocity at the bottom below the crest of any solitary wave does not exceed $47\%$ of the propagation speed.
\end{abstract}

\maketitle

\section{Introduction}

The mathematical theory of solitary waves has a long and well-known history going all the way back to the discovery of John Scott Russell in 1844. We refer to \cite{Sander1991} for an overview of early works on solitary waves. A two-dimensional solitary wave is characterized by the surface profile $\bar y = \bar \eta(\bar x-\bar c\bar t)$ that describes a fluid elevation propagating with wave speed $\bar c>0$ and localized in space, where the limits of $\bar \eta(\bar x)$ as $\bar x \to \pm \infty$ exist and are equal to the unperturbed depth level $\bar d$. (Throughout the paper, dimensional and non-dimensional variables are denoted with and without a bar, respectively; see also \eqref{eq:nondimensionalization} for the precise relations.) In a frame moving along with the wave speed $\bar c$, the solitary wave is subject to the equations
\begin{subequations}\label{eq:original}
	\begin{align}
		\bar u\bar u_{\bar x} + \bar v\bar u_{\bar y} &= -\bar p_{\bar x} & \text{in }0<\bar y<\bar \eta(\bar x),\label{eq:Euler1}\\
		\bar u\bar v_{\bar x} + \bar v\bar v_{\bar y} &= -\bar p_{\bar y} - \bar g & \text{in }0<\bar y<\bar \eta(\bar x),\label{eq:Euler2}\\
		\bar u_{\bar x} + \bar v_{\bar y} &= 0 & \text{in }0<\bar y<\bar \eta(\bar x),\label{eq:incompressible}\\
		\bar v_{\bar x} - \bar u_{\bar y} &= 0 & \text{in }0<\bar y<\bar \eta(\bar x),\label{eq:irrotational}\\
		\bar p &= 0 & \text{on }\bar y=\bar \eta(\bar x),\label{eq:dynamic}\\
		\bar v &= \bar u\bar \eta_{\bar x} & \text{on }\bar y=\bar \eta(\bar x),\label{eq:kinematic_top_velocity}\\
		\bar v &= 0& \text{on }\bar y=0,\label{eq:kinematic_bottom_velocity}
	\end{align}
\end{subequations}
with
\begin{align}
	\lim_{\bar x\to\pm\infty}\bar \eta(\bar x) &= \bar d,\label{eq:level_at_infty}\\
	\lim_{\bar x\to\pm\infty}(\bar u,\bar v)(\bar x,\bar y) &= (-\bar c,0).\nonumber
\end{align}
Here, the fluid domain is $0<\bar y<\bar \eta(\bar x)$, bounded above by the free surface $\bar y=\bar \eta(\bar x)$ and below by the rigid bed $\bar y=0$. Equations \eqref{eq:Euler1}--\eqref{eq:Euler2} are the Euler equations for an inviscid fluid of constant density (normalized to $1$) under the influence of gravity (with $\bar g$ the gravitational constant), while we assume the fluid to be incompressible \eqref{eq:incompressible} and irrotational \eqref{eq:irrotational}. (Throughout, lower indices denote partial derivatives.) Moreover, 
\[(\bar u,\bar v)\text{ is the relative velocity field,}\]
that is,
\[(\bar u+\bar c,\bar v)\text{ is the velocity field in a frame at rest.}\]
Furthermore, the dynamic boundary condition \eqref{eq:dynamic} ensures that the pressure $\bar p$ is equal to the constant atmospheric pressure (normalized to zero) at the surface, while \eqref{eq:kinematic_top_velocity} and \eqref{eq:kinematic_bottom_velocity} are the kinematic boundary conditions at the surface and the bed, respectively.

Then, following \cite{froude1874experiments}, we define the (non-dimensional) Froude number of a traveling solitary wave as
\begin{equation}\label{eq:def_Fr}
	\Fr = \frac{\bar c}{\sqrt{\bar g\bar d}}.
\end{equation}
A highly interesting question, which has attracted a lot of research, is for which values of the Froude number solitary waves can and do exist. It is well-known that solitary waves are absent in the parameter regime $\Fr \leq 1$; see \cite{Starr47}, \cite{AmickToland81b}, \cite{mcleod}, and \cite{KozlovLokharuWheeler21}. This bound is sharp since small-amplitude solitary waves have Froude number larger, but close to $1$.

Less is known for the upper bound. Let us recall \cite{Starr47}, who derived the striking identity
\[
\Fr^2 = 1+ \frac{3}{2\bar d} \frac{\int_\R (\bar \eta - \bar d)^2\,\dd \bar x}{\int_\R (\bar \eta - \bar d)\,\dd \bar x}.
\]
Starting from here and estimating the wave amplitude $\bar a$ from the Bernoulli equation, it is easy to obtain the upper bound $\Fr < 2$. Starr immediately improves this result by showing that
\begin{equation} \label{starr}
	\Fr^2 < 1 + \frac{\bar a}{\bar d},
\end{equation}
which implies
\begin{equation}\label{eq:Starr_sqrt2}
	\Fr < \sqrt{2} \approx 1.4142;
\end{equation}
see \cite{kp} for another derivation. Surprisingly, the inequality $\Fr < \sqrt{2}$ has remained the best analytical result so far, although the numerical evidence from \cite{Longuet-Higgins1974-py} (see also \cite{Miles1980}) suggests a significantly better estimate $\Fr \leq 1.294$. This numerics, however, restricts itself to waves on a bifurcation branch connecting to undisturbed still water.

While it is physically interesting to prove rigorously how fast solitary waves can travel at most (say, in the context of tsunamis), bounds for the Froude number have also turned out to be helpful to say more about the bifurcation diagram for periodic Stokes waves.
Indeed, under the hypothetical improved bound $\Fr < 1.399$, \cite{Kozlov2023-sw} was able to show the existence of subharmonic bifurcations of Stokes waves.

Thus, the goal of this paper is to improve \eqref{eq:Starr_sqrt2} and, as a byproduct, confirm the conditional result of \cite{Kozlov2023-sw}. Our following main theorem is the first rigorous improvement of Starr's bound.

\begin{theorem} \label{thm:main}
	The Froude number of any solitary wave is bounded from above by $1.3451$.
\end{theorem}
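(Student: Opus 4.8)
The plan is to improve upon Starr's bound by extracting a sharper relationship between the Froude number and the wave geometry. Starr's identity $\textrm{Fr}^2 = 1 + \frac{3}{2d}\frac{\int(\eta-d)^2}{\int(\eta-d)}$ reduces the problem to controlling the ratio of these two weighted integrals of the surface elevation, and his bound $\textrm{Fr}^2 < 1 + a/d$ comes from the crude estimate $(\eta-d)^2 \leq a(\eta-d)$. The key to doing better is the flow force function (the abstract already advertises this as the central new tool). Let me think about how I would proceed.

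First I would reformulate the problem in terms of the stream function $\psi$ and introduce the flow force invariant $S$, which is conserved along the wave and encodes a balance of momentum flux, pressure, and gravitational potential. The idea is that the flow force provides a second global identity, complementary to Starr's energy-type identity, that constrains the same geometric quantities. Combining these two identities should let me eliminate the crude pointwise estimate $(\eta-d)^2\le a(\eta-d)$ and instead bound the integral ratio more tightly. Concretely, I expect to derive, from the Bernoulli equation on the free surface together with the flow force balance, an inequality relating $a/d$, $\textrm{Fr}$, and an auxiliary quantity measuring how the streamlines deviate from the trivial flow; the crucial gain is that the amplitude $a$ itself can be bounded in terms of $\textrm{Fr}$ more sharply than $a/d < \textrm{Fr}^2 - 1$.

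The main steps, in order, would be: (i) set up the hodograph or semi-Lagrangian formulation so that the flow domain becomes a fixed strip and the flow force integral becomes tractable; (ii) establish the flow force conservation identity and evaluate it at the crest and at infinity to get a clean algebraic relation; (iii) use the maximum principle / monotonicity properties of solitary waves (the streamlines are monotone, the wave is symmetric with a single crest, and the velocity is largest at the bottom under the crest) to convert the global identities into scalar inequalities in $\textrm{Fr}$ and $a/d$; (iv) combine these with Starr's identity to close a system that forces $\textrm{Fr}$ below an explicit constant. The numerical constant $1.37838$ strongly suggests that the final step reduces to solving (or bounding the root of) a transcendental or polynomial inequality in $\textrm{Fr}$, which I would analyze to extract the stated threshold.

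The hardest part, I expect, will be step (iii): turning the exact flow force identity into a usable one-sided estimate without giving back the sharpness one gains over Starr. The flow force integral involves the pressure and the horizontal momentum flux across a vertical section, and controlling these requires precise pointwise information about the velocity field — in particular lower bounds on the horizontal velocity and control of its vertical profile under the crest. Solitary waves can come arbitrarily close to the extreme (Stokes highest) wave with a stagnation point at the crest, so any estimate must remain valid uniformly up to that limit; ensuring the bounds do not degenerate as the wave approaches the limiting configuration is where the real work lies, and presumably where the specific constant $1.37838$, as opposed to a rounder number, emerges from optimizing the estimates.
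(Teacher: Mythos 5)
There is a genuine gap: what you have written is a research plan, not a proof. Every load-bearing step is conditional --- \enquote{I expect to derive}, \enquote{should let me}, \enquote{presumably where the specific constant \ldots{} emerges} --- and no inequality is actually established that would force $\Fr$ below $\sqrt{2}$, let alone below $1.37838$. A specific numerical constant cannot \enquote{emerge} from an optimization you have not set up; to claim the theorem you must exhibit the explicit estimates being optimized. Moreover, your proposed route runs through Starr's integral identity, using the flow force as a \emph{second global identity} to control the ratio $\int(\eta-d)^2/\int(\eta-d)$, and you give no mechanism by which this combination closes: the crude step you want to replace ($(\eta-d)^2\le a(\eta-d)$) is not obviously improvable by pairing two identities in the unknowns $\Fr$ and $a/d$, and you never write down the system in step (iv) or show it has no solution for large $\Fr$. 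The paper, by contrast, explicitly abandons integral identities altogether.

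The idea your proposal is missing is local, not global: evaluate the flow force $S$ on the single vertical line below the crest, writing
\[
S = r\hat\eta - \frac{\hat\eta^2}{2} + \frac{1}{2\hat\eta} + \frac12\int_0^{\hat\eta}\Bigl(\psi_y - \frac{1}{\hat\eta}\Bigr)^2\,\dd y,
\]
and then obtain a quantitative \emph{lower} bound on the variance term --- equivalently, an improvement of the Cauchy--Schwarz step $1=\int_0^{\hat\eta}\psi_y\,\dd y<\sqrt{\hat\eta}\,(\int_0^{\hat\eta}\psi_y^2\,\dd y)^{1/2}$ that yields only $\Fr<\sqrt2$. This requires pointwise two-sided bounds on $\psi_y(0,y)$, which the paper extracts from maximum-principle arguments applied to functions built from the flow force function $F$ (e.g.\ $F_y/(r-y)$ and $(F_y+d(1-\psi))/(r-y)$), giving $\psi_y(0,y)\le\sqrt{2(r-y)}$ and refinements, plus a lower bound $\psi_y(0,y)\ge \psi_y(0,0)\sqrt{(r-y)/r}$ with $\tfrac12\psi_y^2(x,0)\ge S/r - r/2$ at the bottom. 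Finally, the constant $1.37838$ comes from a case analysis in the distance to stagnation --- $\hat\eta\le 0.95r$, $0.95r<\hat\eta\le(1-\beta)r$, and $\hat\eta>(1-\beta)r$ --- with the parameter $\beta$ (optimal value $0.00201$) balancing the two regimes; your proposal gestures at the difficulty of uniformity near the extreme wave but offers no device, such as this interpolation, to handle it. Without these ingredients, nothing in your argument produces any bound better than Starr's.
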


Some comments on this result:
\begin{itemize}
	\item Our result is for the full Euler model, that is, we do not restrict ourselves to any asymptotic regime such as the shallow water regime. Note also that only solitary waves with Froude number very close to $1$ are accurately modeled by a Korteweg--de Vries soliton. The regime we are mainly dealing with is far from that.
	\item Our result holds for \emph{any} solitary wave, that is, for waves not necessarily connected to undisturbed still water by some bifurcation branch, as opposed to the numerics in \cite{Longuet-Higgins1974-py}. While it is not known if such other branches exist, there is numerical evidence for their existence, at least in the case of (small) constant vorticity; see \cite{VandenBroeck_1994}.
	\item The number $1.3451$ is the root of a complicated analytic function and stated with an accuracy of $5$ significant digits. We shall always, as a convention, state numerical estimates and values with an accuracy of $5$ significant digits.
\end{itemize}

Compared to \cite{Starr47}, our argument is completely different and is not based on any integral identities. Instead, we show by a maximum principle argument that a carefully chosen harmonic function given in terms of the relative velocity has a fixed sign on one half of the fluid domain; see Lemma \ref{thm:slope}. The proof delicately hinges on the bound on the slope of the surface profile obtained by \cite{Amick1987}. Based on Lemma \ref{thm:slope}, we derive the inequality \eqref{eq:est_psiy_up_good} for the relative horizontal velocity at the crest line; see Corollary \ref{cor:u_crestline}. This inequality will then be the main ingredient to prove Theorem \ref{thm:main} in Section \ref{sec:proof}.

In Section \ref{sec:lemmas}, we establish a novel inequality for the relative horizontal velocity at the bottom, which is of separate interest; see Lemma \ref{l:lower_bottom}. Indeed, as an application of Theorem \ref{thm:main} this implies a novel estimate on the velocity at the bottom right below the crest: For any solitary wave this velocity is restricted to at most $46.376\%$ of the propagation speed.

\section{Preliminaries}\label{sec:preliminaries}
As is quite common in the theory of two-dimensional water waves, it is more convenient to work with the stream function instead of the velocity field. Here, the stream function $\bar \psi$ and the relative velocity field $(\bar u,\bar v)$ are related by
\[\bar u=-\bar \psi_{\bar y}<0,\qquad \bar v=\bar \psi_{\bar x}.\]
It turns out that for our purposes it is most convenient to work with non-dimensional variables as in \cite{KeadyNorbury75}, where the mass flux and the gravitational constant are scaled to unity. More precisely, the (dimensional) stream function is measured in units of the mass flux $\bar M$ ($=\bar c\bar d$ for a solitary wave), lengths in units of $\bar M^{2/3}\bar g^{-1/3}$, and velocities in units of $\bar M^{1/3}\bar g^{1/3}$. That is,
\begin{equation}\label{eq:nondimensionalization}
	\bar\psi=\bar M\psi,\quad (\bar x,\bar y,\bar\eta,\bar d)=\bar M^{2/3}\bar g^{-1/3}(x,y,\eta,d),\quad(\bar u,\bar v,\bar c)=\bar M^{1/3}\bar g^{1/3}(u,v,c),
\end{equation}
where the variables without bars are non-dimensional. Note that still
\begin{equation}\label{eq:def_stream_function}
	u=-\psi_y<0,\qquad v=\psi_x.
\end{equation}
Then, the non-dimensionalized version of \eqref{eq:original} reads
\begin{subequations}\label{eq:WaterWaveProblem}
	\begin{align}
		\Delta\psi &= 0 & \text{in } 0 < y < \eta(x),\\
		\frac{|\nabla\psi|^2}{2} + y &= r & \text{on } y = \eta(x),\\
		\psi &= 1 & \text{on } y = \eta(x),\\
		\psi &= 0 & \text{on } y = 0.
	\end{align}
\end{subequations}
Above, $r$ is referred to as the Bernoulli constant and is a parameter of the problem. In what follows we only require $u=-\psi_y$ to be continuous within the fluid domain up to the boundary, while the surface profile is just continuous. In particular, all our results will be valid for extreme waves.

Throughout the rest of this paper, we shall only work with the system \eqref{eq:WaterWaveProblem}, but state the results also in terms of the relative fluid velocity $(u,v)$, which is the physically more relevant quantity. In particular, the reader should always keep the relation \eqref{eq:def_stream_function} in mind.

We are concerned with solitary wave solutions of \eqref{eq:WaterWaveProblem} that can be characterized by surface profiles $\eta$ satisfying the non-dimensional counterpart of \eqref{eq:level_at_infty}, that is,
\begin{equation}\label{eq:eta_infinity}
	\lim_{x\to\pm\infty}\eta(x)=d.
\end{equation}
This means that
\begin{equation}\label{eq:psi_lim_infty}
	\psi\to\frac yd,\quad u\to-\frac1d,\quad v\to 0,\quad \textrm{for }x\to\pm\infty.
\end{equation}
In particular,
\begin{equation}\label{eq:c_nondim}
	c=\frac1d.
\end{equation}
In fact, any solitary wave is a wave of elevation with $\eta(x) > d$, while the symmetric surface profile $\eta$ is strictly monotonically decreasing for $x > 0$ (and increasing for $x<0$); for more details see \cite{CraigSternberg88} and \cite{KozlovLokharuWheeler21}. The crest is located at the vertical $x = 0$, and we denote
\[\hat\eta \coloneqq \max \eta = \eta(0).\]
Given any Bernoulli constant $r > 3/2$ one finds two conjugate streams with depths $d_-(r)$ and $d_+(r)$ that are the unique roots of the equation
\begin{equation}\label{eq:d}
	\frac{1}{2d^2} + d = r.
\end{equation}
Note that the assumption $r > 3/2$ is valid for all water waves; check \cite{AmickToland81b}, \cite{KeadyNorbury75}, and \cite{Kozlov2009-xp}. Thus, any solitary water wave is subject to the bounds
\begin{equation}\label{eq:hateta_interval}
	d = d_-(r) < 1 < d_+(r) < \hat{\eta} \leq r.
\end{equation}
See \cite{KeadyNorbury75} and \cite{Kozlov2009-xp} for more details. Finally, we note that the Froude number $\Fr$ can be computed as
\begin{equation}\label{eq:Fr_nondim}
	\Fr = d^{-3/2} > 1
\end{equation}
in view of \eqref{eq:def_Fr} and \eqref{eq:c_nondim} together with our normalization $g=1$.

Another important quantity, introduced by \cite{Benjamin1954-bl}, is the flow force constant given by
\begin{equation}\label{ff}
	S \coloneqq \int_0^\eta\left(\frac{u^2}{2}-\frac{v^2}{2}+r-y\right)\,\dd y = \frac{d^2}{2} + \frac{1}{d};
\end{equation}
here, the first expression is easily seen to be independent of $x$, just by means of \eqref{eq:WaterWaveProblem}, while the second identity follows by taking the limits $x\to\pm\infty$, using \eqref{eq:eta_infinity} and \eqref{eq:psi_lim_infty}. As for an upper bound for the Froude number, let us present the following simple argument which involves $S$ and leads to the inequality $\Fr < \sqrt{2}$, and is in fact very similar to an argument in \cite{kp}.

We apply \eqref{ff} right below the crest (i.e., for $x=0$) and use the Cauchy--Schwarz inequality (noting that $u$ is not constant)\footnote{The Cauchy--Schwarz inequality asserts that $|\int_I fg|\le(\int_I|f|^2)^{1/2}(\int_I|g|^2)^{1/2}$ for any square-integrable functions $f,g$ on an interval $I\subset\R$. Moreover, equality holds if and only if $f$ and $g$ are linearly dependent. In \eqref{est:CS_brutal}, the inequality is applied for $f\equiv1$, $g=u$, $I=(0,\hat\eta)$.}
\begin{equation}\label{est:CS_brutal}
	1=-\int_0^{\hat\eta}u\,\dd y<\sqrt{\hat\eta}\left(\int_0^{\hat\eta}u^2\,\dd y\right)^{1/2}
\end{equation}
to estimate
\begin{equation}\label{stilde}
	S = r\hat\eta - \frac{\hat\eta^2}{2} + \frac12 \int_0^{\hat\eta} u^2\,\dd y > r \hat{\eta} - \frac{\hat{\eta}^2}{2} + \frac{1}{2 \hat{\eta}} \eqqcolon \mathcal{S}(\hat{\eta}).
\end{equation}
A direct computation reveals that $\mathcal{S}'(t) = 0$ only for $t = d$ and $t = d_+$. Thus, by \eqref{eq:hateta_interval} and since $\mathcal{S}$ is decreasing for $t > d_+$, we obtain 
\begin{equation} \label{eq:intro:S}
	S > \mathcal{S}(r) = \frac{r^2}{2} + \frac{1}{2r}.
\end{equation}
In combination with the identities \eqref{eq:d} and \eqref{ff}, one immediately finds that this inequality is only true if $\Fr < \sqrt{2}$. Furthermore, one can plug \eqref{eq:d}, \eqref{ff}, and $\hat{\eta} = d + a$ into \eqref{stilde} to obtain Starr's inequality \eqref{starr}, and that is completely avoiding any integral identities. Let us also remark that the bound $\Fr<\sqrt2$ yields the upper bound $r< 4^{1/3}\approx 1.5874$ via \eqref{eq:d} and \eqref{eq:Fr_nondim}; therefore, throughout this paper it is sufficient to consider
\begin{equation}\label{range_r}
	r\in (\tfrac32, 4^{1/3}).
\end{equation}

Our proof of the main theorem is based on improving the Cauchy--Schwarz inequality \eqref{est:CS_brutal} or, equivalently, on estimating the integral
\begin{equation}\label{int:u-average}
	\frac12 \int_0^{\hat{\eta}} \left( u + \hat{\eta}^{-1} \right)^2 \, \dd y
\end{equation}
from below, where $-\hat\eta^{-1}$ is the average of $u$.

\section{Estimates below the crest}\label{sec:slope}
Since the classical works of \cite{Starr47}, or \cite{kp} and \cite{Long56}, it is known that a key ingredient is a careful estimate of $u$ at the crest line. A basic property is the following: By a maximum principle argument for $v=\psi_x$ on $x>0$, using the monotonicity of the surface profile, we have
\[\psi_{yy}(0,y) = -\psi_{xx}(0,y)<0,\text{ that is, }u_y(0,y)>0\text{ for }y>0.\]
This means that the horizontal velocity is strictly increasing from bottom to top at the crest line. Little, however, does it say how fast this increase is. We shall make this much more precise in the following completely new result.

\begin{lemma} \label{thm:slope}
	Given an arbitrary solitary wave solution, consider the harmonic function 
	\[
	f \coloneqq (\psi_y^2-\psi_x^2)\psi_{xy}-2\psi_x\psi_y\psi_{xx}+\tfrac35\psi_x = -(u^2-v^2)u_x+2u u_y v+\tfrac35v.
	\]
	Then, $f$ is negative everywhere in $\Omega\coloneqq\{(x,y)\in\R^2:0<y<\eta(x),-\infty<x<0\}$, which is the left open half of the fluid domain, where $\eta' > 0$.
\end{lemma}

We formulate the claim for solitary waves, while it is also true for any Stokes wave solution.

\begin{proof}
	First, note that $f$ is indeed harmonic, which follows from a straightforward computation. Moreover, $f < 0$ along the bottom of $\Omega$ because $\psi_{xy} < 0$ there. At the crest line $f = 0$, so we assume that there is a global positive maximum, attained  somewhere at the surface. Let us compute the normal derivative there. In the following, all quantities are understood to be evaluated at this maximum. Introducing the tangential derivative
	\[T\coloneqq \partial_x + \eta'\partial_y,\]
	we know that
	\[Tf = 0.\]
	Moreover, by \eqref{eq:WaterWaveProblem} we have
	\[\Delta\psi_x = \Delta\psi_y = 0, \quad T^2\left(\frac{|\nabla\psi|^2}{2}+y\right) = 0, \quad T^3\psi = 0.\]
	These, in total five equations form an inhomogeneous system of linear equations for the third derivatives $\psi_{xxx}$, $\psi_{xxy}$, $\psi_{xyy}$, $\psi_{yyy}$, and $\eta'''$, with lower order derivatives as coefficients. Thus, these third derivatives can be uniquely expressed in terms of lower order derivatives. Similarly, with the same line of reasoning, by
	\[\Delta\psi = 0,\quad T\left(\frac{|\nabla\psi|^2}{2}+y\right) = 0, \quad T^2\psi = 0,\quad (\psi_y^2-\psi_x^2)\psi_{xy}-2\psi_x\psi_y\psi_{xx}+\tfrac35\psi_x = f,\]
	the second order derivatives $\psi_{xx}$, $\psi_{xy}$, $\psi_{yy}$, and $\eta''$ can be expressed in terms of first order derivatives and $f$. Finally, through $T\psi=0$, $\psi_x$ can be written in terms of $\psi_y$ and $\eta'$. Altogether, we see that the normal derivative of $f$ can be written in terms of $\psi_y$, $\eta'$, and $f$. Omitting the lengthy details of this computation, we arrive at
	\begin{equation} \label{eq:slope:1}
		-\eta' f_x + f_y = A + B f + C f^2,
	\end{equation}
	where
	\begin{align*}
		A &= -\eta' \frac{3-24\eta'^2+57\eta'^4-32\eta'^6+4\eta'^8}{25(1-\eta'^2)^3\psi_y},\\
		B &= -\frac{(1+\eta'^2)^2(2-5\eta'^2-9\eta'^4+4\eta'^6)}{20\eta'^2(1-\eta'^2)^3\psi_y^2},\\
		C &= -\frac{(1+\eta'^2)^5}{16 \eta'^3 (1-\eta'^2)^3\psi_y^3}.
	\end{align*}
	Now notice
	\begin{equation}\label{eq:maxslope}
		|\eta'| < 0.60443,
	\end{equation}
	which corresponds to the upper bound $31.15^\circ$ for the slope obtained in \cite{Amick1987}. Thus, it is easy to see that $A<0$. 
	Note that the coefficient $3/5$ in the definition of $f$ is the minimal value of this coefficient such that $A<0$ for all possible values of $\eta'$.
	
	Thus, $A,C < 0$. If $B \leq 0$, then we immediately arrive at a contradiction with the Hopf lemma. Assume now $B>0$, which forces 
	\begin{equation}\label{eq:eta_low}
		\eta' > 0.52729.
	\end{equation}
	Then we complete the expression in \eqref{eq:slope:1} to a full square
	\[
	-\eta' f_x + f_y = A + D^2 + (B f + C f^2-D^2),
	\]
	so that the expression in brackets is a negative full square, and compute
	\[
	A + D^2 = \frac{4-24\eta'^2+21\eta'^4+40\eta'^6}{100 \eta'(1-\eta'^4)\psi_y}.
	\]
	This quantity is strictly negative for $0.49028 < \eta' < 0.60605$, which is satisfied in view of \eqref{eq:maxslope} and \eqref{eq:eta_low}. Thus, the normal derivative of $f$ is negative, leading to a contradiction. Therefore, $f < 0$ everywhere in $\Omega$.
\end{proof}

\begin{corollary}\label{cor:u_crestline}
	Everywhere below the crest we have
	\begin{equation}\label{eq:cor_monotone}
		(\psi_y^2 \psi_{yy}+\tfrac35 \psi_y)_y = -(u^2 u_y+\tfrac35 u)_y < 0.
	\end{equation}
	Moreover, at the crest line,
	\begin{equation}\label{eq:est_psiy_up_good}
		-u = \psi_y \le \frac{\sqrt2\sqrt{80r-27y-53\hat\eta}}{3\sqrt5} - \frac{\sqrt{2(r-\hat\eta)}}{3}.
	\end{equation}
	In particular,
	\begin{equation}\label{eq:est_psiy_up_good_extreme}
		u^2 \leq \tfrac65 (r-y)
	\end{equation}
	at the crest line for an extreme wave.
\end{corollary}
\begin{proof}
	First, \eqref{eq:cor_monotone} follows directly from the Hopf lemma, since $f = 0$ below the crest, while $0$ is the global maximum of $f$ on the closure of $\Omega$.
	
	Let us now consider an extreme wave for which $\psi_y(0,\hat\eta)=0$. Then, by \eqref{eq:cor_monotone}, $\psi_y^2 \psi_{yy}+\tfrac35 \psi_y\ge0$ at the crest line. Dividing by $\psi_y$ and integrating from $y$ to $\hat\eta$ yields \eqref{eq:est_psiy_up_good_extreme}.
	
	Consider now a non-extreme wave. At the crest, we have $\psi_{yy}=\eta''\psi_y$ in view of $T^2\psi=0$. Similarly as in the previous proof, $\eta''$ can be expressed in terms of $\psi_y$, $\eta'$, and $f$:
	\[\eta'' = -\frac{5(1+\eta'^2)^2 f + 8\eta' (1-3\eta'^2+\eta'^4)\psi_y}{20\eta' (1-\eta'^2) \psi_y^3}.\]
	Since $f/\eta'\le0$ for $x\ne0$ by Lemma \ref{thm:slope}, we get
	\[\eta'' \ge -\frac{2}{5\psi_y^2}\]
	at the crest. Therefore, at the crest line,
	\[\psi_y^2 \psi_{yy}+\tfrac35 \psi_y \ge \frac{\psi_y(0,\hat\eta)}{5} > 0.\]
	Thus, we have a differential inequality of the form
	\[h' + ah^{1/3} \ge b,\]
	with $h=\psi_y^3(0,\cdot)$, $a=9/5$, and $b=3\psi_y(0,\hat\eta)/5=3\sqrt{2(r-\hat\eta)}/5$, such that $ah^{1/3}>b$. This inequality can be rearranged and integrated from $y$ to $\hat\eta$ to yield
	\[y-\hat\eta \le \left[\frac{3(2b+ah^{1/3})h^{1/3}}{2a^2} + \frac{3b^2\ln(ah^{1/3}-b)}{a^3}\right]\Bigg|_{h=h(y)}^{h(\hat\eta)}.\]
	Therefore,
	\begin{align*}
		\psi_y^2 + \tfrac23 \sqrt{2(r-\hat\eta)}\psi_y &\le \tfrac65 (\hat\eta-y) + \tfrac29 (r-\hat\eta) \left(15 + \ln\frac{8(r-\hat\eta)}{\left(3\psi_y-\sqrt{2(r-\hat\eta)}\right)^2}\right)\\
		&\le \tfrac65 (\hat\eta-y) + \tfrac{10}{3} (r-\hat\eta),
	\end{align*}
	where in the last step we simply used $\psi_y\ge\sqrt{2(r-\hat\eta)}$. Completing the square, we arrive at \eqref{eq:est_psiy_up_good}.
\end{proof}

Now we are ready to combine the previous results, presenting a proof of the main theorem.

\section{A proof of Theorem \ref{thm:main}}\label{sec:proof}
Throughout the proof, we only estimate quantities below the crest line, that is, all quantities are understood to be evaluated at $x=0$.

Let us recall that the crucial task is to estimate the integral \eqref{int:u-average} from below. That is, we have to estimate carefully how much $u$ deviates from its average with the help of the findings of Section \ref{sec:slope}. Let us explain the main idea on how to do this: We split the integral \eqref{int:u-average} into three pieces: close to bottom, close to crest, and in between. We know that $u$ is strictly increasing with height, so $u+\hat\eta^{-1}$ is positive sufficiently close to the crest. What \enquote{sufficiently} means can be made precise with the help of our new estimate \eqref{eq:est_psiy_up_good}. Now, this inequality also provides an explicit lower bound for the \enquote{close to crest} piece. About the \enquote{in between} piece we cannot say much, unfortunately, and therefore, we simply throw it away. Finally, close to the bottom $u+\hat\eta^{-1}$ is negative, and there the integral of its square can be estimated from below by (minus) its average by means of the Cauchy--Schwarz inequality. But since the integral of $u+\hat\eta^{-1}$ along the whole crest line vanishes, that average can be translated again to an integral close to the crest, for which we can use the same techniques as before.

Put together, we then get an explicit lower bound for the integral \eqref{int:u-average}, with which $S$ can be estimated from below, similarly to \eqref{stilde}. The resulting inequality can be written only in terms of $S$, $r$, and $d$, but not of $\hat\eta$, after establishing some monotonicity property and therefore being allowed to replace $\hat\eta$ by $r$. Next, this inequality can be written only in terms of $d$ by means of \eqref{eq:d} and \eqref{ff}, then consisting only of explicit analytic expressions in $d$. It turns out that the resulting inequality can be viewed as a lower bound for $d$ (in much the same way as we recovered \eqref{eq:Starr_sqrt2} in Section \ref{sec:preliminaries}). This lower bound, being the root of a complicated, but explicit analytic function, we cannot state exactly, but rather numerically up to arbitrary precision. Finally, such a lower bound for $d$ immediately provides an upper bound for the Froude number in view of \eqref{eq:Fr_nondim}.

More concretely, the first step is to estimate
\begin{align*}
	S & = \frac{1}{2 \hat{\eta}} + r \hat{\eta} - \frac{\hat{\eta}^2}{2} + \frac12\int_0^{\hat{\eta}} (u + \hat{\eta}^{-1})^2 \, \dd y \\
	& \geq \frac{1}{2 \hat{\eta}} + r \hat{\eta} - \frac{\hat{\eta}^2}{2} + \frac12\int_0^{y_0} (u + \hat{\eta}^{-1})^2 \, \dd y + \frac12\int_{y_1}^{\hat{\eta}} (u + \hat{\eta}^{-1})^2 \, \dd y \\
	& \eqqcolon \frac{1}{2 \hat{\eta}} + r \hat{\eta} - \frac{\hat{\eta}^2}{2} + I_1 + I_2.
\end{align*}
Here, $y_0 < y_1$ are defined from the respective relations
\[
u(0,y_0) = -\hat\eta^{-1}, \quad \frac{\sqrt2\sqrt{80r-27y_1-53\hat\eta}}{3\sqrt5} - \frac{\sqrt{2(r-\hat\eta)}}{3} = \hat\eta^{-1};
\]
recall \eqref{eq:est_psiy_up_good} and that $u(0,y)$ is strictly increasing in $y$. For the first integral we use
\[\int_0^{\hat\eta} (u + \hat\eta^{-1})\, \dd y = 0\]
and the Cauchy--Schwarz inequality to find
\begin{align*}
	I_1 & \geq \frac{1}{2y_0} \left(\int_0^{y_0} |u + \hat{\eta}^{-1}| \, \dd y\right)^2 \geq \frac{1}{2y_1} \left(\int_0^{y_0} |u + \hat{\eta}^{-1}| \, \dd y\right)^2 \\
	&  = \frac{1}{2y_1} \left(\int_{y_0}^{\hat{\eta}} |u + \hat{\eta}^{-1}| \, \dd y\right)^2 \\
	&\geq  \frac{1}{2y_1} \left(\int_{y_1}^{\hat{\eta}} \left( \hat{\eta}^{-1} - \frac{\sqrt2\sqrt{80r-27y-53\hat\eta}}{3\sqrt5} + \frac{\sqrt{2(r-\hat\eta)}}{3} \right) \,\dd y\right)^2 \\
	& \eqqcolon \tilde I_1(\hat{\eta},r)
\end{align*}
with the help of \eqref{eq:est_psiy_up_good}. The second integral can be estimated directly by means of \eqref{eq:est_psiy_up_good} as
\[
I_2 \ge \frac{1}{2} \int_{y_1}^{\hat{\eta}} \left(\hat{\eta}^{-1} - \frac{\sqrt2\sqrt{80r-27y-53\hat\eta}}{3\sqrt5} + \frac{\sqrt{2(r-\hat\eta)}}{3}\right)^2\, \dd y \eqqcolon \tilde I_2(\hat{\eta},r).
\]
Both $\tilde I_1$ and $\tilde I_2$ can be computed explicitly in terms of $\hat\eta$ and $r$, but we omit the resulting complicated formulas. Combining the two results, we conclude
\[
S \geq \frac{1}{2 \hat{\eta}} + r \hat{\eta} - \frac{\hat{\eta}^2}{2} + \tilde I_1(\hat{\eta},r) + \tilde I_2(\hat{\eta},r) \eqqcolon I(\hat{\eta},r).
\]
To study $I(\hat\eta,r)$, it is beneficial to introduce $s=\sqrt{r-\hat\eta}$ and $J(s,r)=I(\hat\eta,r)-I(r,r)$ because $J$ is analytic in $(s,r)\in [0,1/\sqrt2)\times(3/2,4^{1/3})\eqqcolon D$. Here, recall \eqref{range_r} and $r-\hat\eta < r-d_+ = (2d_+^2)^{-1} < 1/2$. We find that $J\ge 0$ in $D$. We have validated this inequality rigorously with interval arithmetic up to an error of $10^{-6}$ which is sufficient to ensure the level of precision for the bounds later on. To convince the reader, we include a plot of $J(s,r)/s$ (we are dividing by $s$ since $J(0,r)=0$) in Figure \ref{fig1}.

\begin{figure}
	\centerline{\includegraphics[width=8cm]{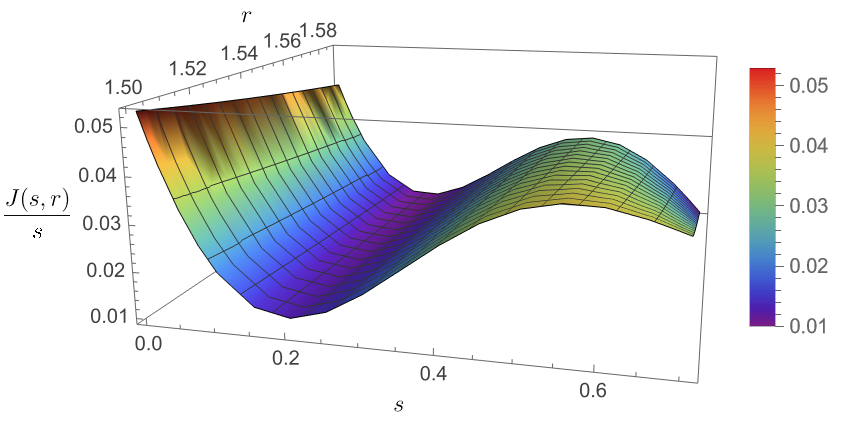}}
	\caption{Plot of $J(s,r)/s$.}
	\label{fig1}
\end{figure}

Thus, $I(\hat\eta,r)\ge I(r,r)$ and hence
\[S\ge I(r,r).\]
This, as mentioned before, can be viewed as an explicit inequality only in terms of $d$ through \eqref{eq:d} and \eqref{ff}, and we find that necessarily
\begin{equation}\label{eq:lowerbound_d}
	d > 0.82066
\end{equation}
and
\[\Fr = d^{-3/2} < 1.3451.\]
This finishes the proof of Theorem \ref{thm:main}.

\section{Further novel estimates of the horizontal velocity}\label{sec:lemmas}
While we were previously concerned with lower bounds on $u$, we shall now also derive a novel upper bound. Combined with Theorem \ref{thm:main}, this then implies a novel estimate for the velocity at the bottom below the crest. We will formulate this estimate in dimensional variables, which is most relevant for practical applications.

To this end, let us look at the bottom. A basic property is the following: Comparing with a uniform laminar flow with depth $\hat\eta$, it is clear that
\[\frac12 u^2(x,0) \ge \frac{1}{2\hat\eta^2}.\]
This inequality, however, is not very good for near-extreme waves, and we shall improve it in Lemma \ref{l:lower_bottom}.

There, we make use of the flow force function
\[
F\coloneqq\int_0^y\left(\frac{u^2}{2}-\frac{v^2}{2}+r-y'\right)\,\dd y',
\]
defined in the fluid domain and satisfying
\begin{equation}\label{fff}
	\Delta F = -1 \text{ in } 0<y<\eta(x);\quad F=S\text{ on } y=\eta(x);\quad F=0\text{ on } y=0.
\end{equation}
The following lemma follows from maximum principles applied to a carefully chosen quantity based on $F$.

\begin{lemma} \label{l:lower_bottom}
	For an arbitrary solution, we have
	\begin{equation}\label{est:psiy_bottom_better}
		\frac12 u^2(x,0) \geq \frac{S}{r} - \frac{r}{2}
	\end{equation}
	everywhere at the bottom.
\end{lemma}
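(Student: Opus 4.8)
The plan is to reformulate \eqref{est:psiy_bottom_better} as a lower bound for the normal derivative of the flow force function $F$ at the bottom and then prove that bound by comparison with an explicit barrier. First I would use that the bottom is flat and that $\psi\equiv 0$ there by \eqref{eq:kinematic_bottom}; differentiating this relation in $x$ gives $\psi_x(x,0)=0$, so that
\[
    F_y(x,0)=\tfrac12\psi_y^2(x,0)-\tfrac12\psi_x^2(x,0)+r=\tfrac12\psi_y^2(x,0)+r .
\]
Hence \eqref{est:psiy_bottom_better} is equivalent to the estimate $F_y(x,0)\ge \frac{S}{r}+\frac{r}{2}$ for every $x$.

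To obtain this, I would compare $F$ with the one-dimensional barrier $\Phi(y):=cy-\tfrac12y^2$, where $c:=\frac{S}{r}+\frac{r}{2}$. Since $\Phi''=-1$, the difference $w:=F-\Phi$ is harmonic by \eqref{fff}. On the bottom $w=0$, while on the free surface $y=\eta(x)$ one has
\[
    w=S-\Phi(\eta)=\eta\left(g(\eta)-c\right),\qquad g(t):=\frac{S}{t}+\frac{t}{2}.
\]
Thus $w\ge 0$ on the surface as soon as $c\le g(\eta)$ at every surface point. Because $g'(t)=\tfrac12-\frac{S}{t^2}<0$ for $t<\sqrt{2S}$, and since $\hat\eta\le r<\sqrt{2S}$ throughout the range \eqref{range_r}, the function $g$ is strictly decreasing on the interval $[d,\hat\eta]$ containing all surface heights; its minimum there is $g(\hat\eta)\ge g(r)=c$ by \eqref{eq:hateta_interval}. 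Therefore $w\ge0$ along the entire finite boundary.

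Finally I would invoke the maximum principle to conclude $w\ge0$ throughout the fluid domain. As $w$ vanishes on the flat bottom, dividing by $y$ and letting $y\to0^+$ gives $F_y(x,0)\ge\Phi'(0)=c$, which is precisely the desired inequality. The one step requiring genuine care is that the fluid domain is unbounded, so the maximum principle is not immediate. Here I would exploit that a solitary wave converges to its asymptotic uniform stream of depth $d$: a direct computation shows $w\to(2r-d-c)\,y$ as $x\to\pm\infty$, and one checks $c\le 2r-d$ in the range \eqref{range_r}, so this limit profile is nonnegative and $w$ is bounded. An exhaustion of the domain by large truncated rectangles, together with this controlled far-field behaviour, then makes the comparison rigorous. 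I expect this far-field justification to be the only delicate point; the remainder is an elementary computation with the barrier $\Phi$.
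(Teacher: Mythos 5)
Your proof is correct and takes essentially the same route as the paper: both compare $F$ with the harmonic-defect barrier $cy-\tfrac12 y^2$, verify nonnegativity of the difference on the boundary via the monotonicity of $t\mapsto S/t+t/2$ on $[0,\sqrt{2S}]$ (the paper takes $c=S/\hat\eta+\hat\eta/2$ and then lowers it to $S/r+r/2$, which is exactly your constant), apply the weak maximum principle, and read off the estimate from $F_y(x,0)=\tfrac12\psi_y^2(x,0)+r$. The only difference is that you spell out the far-field/exhaustion justification of the maximum principle on the unbounded domain, a point the paper passes over silently.
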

\begin{proof}
	We consider the harmonic function
	\[
	h = F - \left(-\frac12 y^2 + \mu y\right), \ \ \mu = \frac{S}{\hat{\eta}} + \frac{\hat{\eta}}{2}.
	\]
	Note that $-\frac12 y^2 + \mu y$ is increasing on the interval $[0, \hat{\eta}]$, since the derivative is zero only for $y = \mu$ but 
	\begin{equation}\label{est:mu}
		\mu \ge \frac{S}{r} + \frac{r}{2} > r;
	\end{equation}
	this inequality follows from the definition of $\mu$ and its monotonicity in $\hat{\eta}$ on the interval $[0, \sqrt{2S}]$, which contains $r$ by \eqref{eq:intro:S}.
	
	Thus, we see that $h$ is non-negative at the surface by \eqref{fff} and the definition of $\mu$, so it attains its minimum everywhere at the bottom, where $h = 0$. Now, the weak maximum principle and \eqref{est:mu} give the desired inequality.
\end{proof}

Using Lemma \ref{l:lower_bottom}, we can estimate from above the velocity at the bottom right below the crest. Indeed, \eqref{est:psiy_bottom_better} expressed in terms of $d$ by means of \eqref{eq:d} and \eqref{ff} give, together with \eqref{eq:c_nondim} and our novel lower bound \eqref{eq:lowerbound_d} for $d$ obtained in the proof of Theorem \ref{thm:main},
\[
-\frac{u(0,0)}{c} \ge d\sqrt{2\left(\frac Sr-\frac r2\right)} > 0.53624.
\]
Clearly, this inequality then also holds for the ratio $-\bar u(0,0)/\bar c$ of dimensional quantities. In particular,
\[
\bar u(0,0)+\bar c < 0.46376\, \bar c.
\]
Thus, for any solitary wave the velocity at the bottom below the crest is restricted to at most $46.376\%$ of the propagation speed.

\paragraph{Acknowledgments}{This research was funded in part by the Austrian Science Fund (FWF) [10.55776/ ESP8360524]. For open access purposes, the authors have applied a CC BY public copyright license to any author-accepted manuscript version arising from this submission.}

\bibliographystyle{siam}	
\bibliography{bibliography}

\end{document}